\newtheorem{theorem}{Theorem}[section]
\newtheorem{example}[theorem]{Example}
\DeclareMathOperator{\frechetdiff}{\mathit d}
\newcommand{\fd}[1]{\hskip-0.2em\frechetdiff\hskip -0.3em{#1}}
\newcommand{\closefd}[1]{\hskip-0.2em\frechetdiff\hskip -0.35em{#1}}
\begin{document}

\title{A note on the parallel sum}
\author{Frank Hansen}
\date{July 16, 2021\\\small{Revised November 16, 2021}}

\maketitle

\begin{abstract} By using a variational principle we find a necessary and sufficient condition for an operator to majorise the parallel sum of two positive definite operators. This result is then used as a vehicle to create new operator inequalities involving the parallel sum.\\[1ex]
{\bf MSC2010} classification: 47A63\\[1ex]
{\bf{Key words and phrases:}}  the parallel sum; operator inequality.
\end{abstract}

\section{Introduction}
Anderson and Duffin defined the parallel sum $ A:B $ of two positive definite operators $ A $ and $ B $ by setting
\[
A:B=\frac{1}{A^{-1}+B^{-1}}\,,
\]
and they proved  \cite[Lemma 18]{anderson:1969} that for any vector $ \xi $ the inner product
\begin{equation}\label{variational principle}
\bigl((A:B)\xi\mid \xi\bigr)=\inf_\eta\bigl\{(A\eta\mid\eta\bigr) +(B(\xi-\eta)\mid\xi-\eta)\bigr\}.
\end{equation}
We begin by giving an intuitive proof of the variational result in (\ref{variational principle}). The purpose of this note is then to establish that the operator inequality 
\[
A:B \le H
\]
is valid, if and only if there exists an operator $ C $ such that
\[
H=C^*AC+(I-C^*)B(I-C). 
\]
This result then functions as a generator of operator inequalities involving the parallel sum. We refer to \cite{Tian:2020} for a recent paper on the parallel sum.

\section{Preliminaries}

We first establish the rule of differentiating an expectation value with respect to a vector,
\[
\fd{}_x(Ax\mid x)\xi=2\text{Re}(Ax\mid\xi).
\]
Indeed,
\[
\begin{array}{l}
\displaystyle\fd{}_x(Ax\mid x)\xi=\lim_{\varepsilon\to 0}\frac{1}{\varepsilon}\Bigl(\bigl(A(x+\varepsilon\xi)\mid x+\varepsilon\xi\bigr)-(Ax\mid x)\Bigr)\\[2ex]
\displaystyle
=\lim_{\varepsilon\to 0}\frac{1}{\varepsilon}\Bigl(\varepsilon (Ax\mid\xi)+\varepsilon (A\xi\mid x)+\varepsilon^2 (A\xi\mid\xi)\Bigr)=2\text{Re}(Ax\mid\xi).
\end{array}
\]
Let $ A,B $ be positive definite matrices and consider to a given vector $ x $ the vector function
\[
f(\xi)=\bigl(A\xi\mid \xi\bigr)+\bigl(B(x-\xi)\mid x-\xi\bigr).
\]
It is manifestly convex with derivative
\[
\begin{array}{rl}
\closefd{} f(\xi)\eta&=2\text{Re}\bigl(A\xi\mid\eta\bigr)-2\text{Re}\bigl(B(x-\xi)\mid\eta\bigr)\\[1.5ex]
&=2\text{Re}\bigl(A\xi-B(x-\xi)\mid\eta\bigr).
\end{array}
\]
The derivative vanishes in all $ \eta $ if and only if
\[
A\xi-B(x-\xi)=0\qquad\text{or}\qquad (A+B)\xi=Bx,
\]
and this is equivalent to
\begin{equation}\label{formula for xi}
\xi=(A+B)^{-1}Bx.
\end{equation}
In addition,
\[
\begin{array}{rl}
x-\xi&=x-(A+B)^{-1}Bx=(A+B)^{-1}\bigl((A+B)x-Bx\bigr)\\[1.5ex]
&=(A+B)^{-1}Ax.
\end{array}
\]
We thus obtain that
\[
\bigl(A\xi\mid \xi\bigr)=\bigl(A(A+B)^{-1}Bx\mid(A+B)^{-1}Bx\bigr)
\]
and
\[
\bigl(B(x-\xi)\mid x-\xi\bigr)=\bigl(B(A+B)^{-1}Ax\mid(A+B)^{-1}Ax\bigr).
\]
Since $ f $ is convex the global minimum of $ f $ is obtained in $ \xi $ with minimum value
\[
f(\xi)=\bigl(A\xi\mid \xi\bigr)+\bigl(B(x-\xi)\mid x-\xi\bigr).
\]
Since 
\[
B(A+B)^{-1}A=(A^{-1}+B^{-1})^{-1}=A(A+B)^{-1}B,
\]
we calculate the global minimum value to be
\[
\begin{array}{rl}
f(\xi)&=\bigl((A^{-1}+B^{-1})^{-1}x\mid (A+B)^{-1}Bx+(A+B)^{-1}Ax\bigr)\\[2ex]
&=\bigl((A^{-1}+B^{-1})^{-1}x\mid x\bigr)=\bigl((A:B)x \mid x\bigr),
\end{array}
\]
where $ A:B $
is the parallel sum of $ A $ and $ B. $ It is also half of the harmonic mean.
In conclusion, we recover (\ref{variational principle})
and obtain the inequality
\[
\bigl((A:B)x\mid x\bigr)=f(\xi)\le f(\eta)
\]
for any other vector $ \eta. $ For an arbitrary operator $ D $ we set $ \eta=D\xi $ and obtain
\[
\begin{array}{l}
\bigl((A:B)x\mid x\bigr)\le f(D\xi)\\[1.5ex]
=\bigl(AD\xi\mid D\xi\bigr)+\bigl(B(x-D\xi)\mid x-D\xi\bigr)\\[1.5ex]
=\bigl(AD(A+B)^{-1}Bx\mid D(A+B)^{-1}Bx\bigr)\\[1ex]
\hskip 6em+\bigl(B(x-D(A+B)^{-1}Bx)\mid x-D(A+B)^{-1}Bx\bigr),
\end{array}
\]
where we used (\ref{formula for xi}). Putting $ C=D(A+B)^{-1}B $ this is equivalent to
\[
\bigl((A:B)x\mid x\bigr)\le \bigl(C^*ACx\mid x\bigr)+\bigl((I-C^*)B(I-C)x\mid x\bigr).
\]
We have thus proved the following result.

\begin{theorem}\label{generator of operator inequalities}
Let $ A $ and $ B $ be positive definite operators. Then
\[
A:B\le C^*AC+(I-C^*)B(I-C) 
\]
for an arbitrary operator  $ C. $
\end{theorem}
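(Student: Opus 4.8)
The plan is to upgrade the scalar (quadratic-form) inequality already obtained in the preliminaries into a genuine operator inequality, and to verify that the operator $C$ appearing there may be taken arbitrary. Almost all of the analytic work is carried by the variational principle (\ref{variational principle}) together with the convexity argument preceding the statement, so the proof is largely a matter of packaging those computations.

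First I would recall that for each fixed vector $x$ the function $f(\xi)=(A\xi\mid\xi)+(B(x-\xi)\mid x-\xi)$ is convex and attains its global minimum value $((A:B)x\mid x)$ at the point $\xi=(A+B)^{-1}Bx$ given in (\ref{formula for xi}). Hence $((A:B)x\mid x)=f(\xi)\le f(\eta)$ for every vector $\eta$. Substituting $\eta=D\xi$ for an arbitrary operator $D$ and writing $C=D(A+B)^{-1}B$, the computation in the preliminaries yields the pointwise inequality
\[
((A:B)x\mid x)\le(C^*ACx\mid x)+((I-C^*)B(I-C)x\mid x)
\]
for every vector $x$.

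The key observation needed to pass from the special $C$ of this form to an \emph{arbitrary} operator $C$ is that the map $D\mapsto C=D(A+B)^{-1}B$ is a bijection on the algebra of all operators. Indeed, since $A$ and $B$ are positive definite, both $A+B$ and $B$ are invertible, and therefore so is $(A+B)^{-1}B$; thus for any prescribed $C$ one may take $D=CB^{-1}(A+B)$ to realize it. Consequently the displayed pointwise inequality holds for every operator $C$, not merely for those of the special product form.

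Finally I would set $H=C^*AC+(I-C^*)B(I-C)$ and note that $H$ is self-adjoint, since $C^*AC$ and $(I-C^*)B(I-C)$ are each self-adjoint because $A$ and $B$ are. As $A:B$ is self-adjoint as well, the inequality $((A:B)x\mid x)\le(Hx\mid x)$ holding for all $x$ is precisely the statement that $H-(A:B)$ is a positive operator, that is, $A:B\le H$. I expect the only genuinely subtle point to be the bijection argument guaranteeing that ``arbitrary $C$'' is truly attained; the remainder is the standard equivalence between domination of quadratic forms and the operator order for self-adjoint operators.
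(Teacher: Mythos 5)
Your proposal is correct and follows essentially the same route as the paper: the paper's proof of this theorem is exactly the variational/convexity computation in the preliminaries, with $\eta=D\xi$ and $C=D(A+B)^{-1}B$. Your explicit remark that $D\mapsto D(A+B)^{-1}B$ is a bijection (so that every $C$ is attained) makes precise a step the paper leaves implicit, but it is the same argument.
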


We next investigate the range of the operator function
\[
F(C)=C^*AC+(I-C^*)B(I-C)
\]
to given positive definite operators $ A $ and $ B. $ We consider the operator equation $ F(C)=H $
and rewrite the equation as
\[
C^*(A+B)C+B-C^*B-BC=H.
\]
By multiplying with $ (A+B)^{-1/2} $ from the left and from the right the equation is equivalent to
\[
\begin{array}{l}
(A+B)^{-1/2}C^*(A+B)C(A+B)^{-1/2}+(A+B)^{-1/2} B (A+B)^{-1/2} \\[1.5ex]
-(A+B)^{-1/2} C^*B(A+B)^{-1/2} -(A+B)^{-1/2} BC(A+B)^{-1/2}\\[1.5ex]
=(A+B)^{-1/2} H(A+B)^{-1/2}. 
\end{array}
\]
We now set
\[
X=(A+B)^{1/2}C(A+B)^{-1/2}\quad\text{and}\quad Y=(A+B)^{-1/2}B(A+B)^{-1/2}
\]
and rewrite the equation as
\[
X^*X+Y-X^*Y-YX
=(A+B)^{-1/2} H (A+B)^{-1/2},
\]
which again may be written as
\[
(X-Y)^*(X-Y)-Y^2+Y
=(A+B)^{-1/2} H (A+B)^{-1/2}
\]
or
\[
\begin{array}{l}
(X-Y)^*(X-Y)
=(A+B)^{-1/2} H (A+B)^{-1/2}+Y^2-Y\\[1.5ex]
=(A+B)^{-1/2}\bigl(H-B+B(A+B)^{-1}B\bigr)(A+B)^{-1/2}\\[1.5ex]
=(A+B)^{-1/2}\bigl(H-B(A+B)^{-1}(A+B-B)\bigr)(A+B)^{-1/2}\\[1.5ex]
=(A+B)^{-1/2}\bigl(H-(A:B)\bigr)(A+B)^{-1/2}.
\end{array}
\]
The equation can thus be solved if and only if
\[
H\ge A:B.
\]
Under this condition we may find positive definite solutions in $ X $ given by
\[
X=Y+\Bigl((A+B)^{-1/2}\bigl(H-(A:B)\bigr)(A+B)^{-1/2}\Bigr)^{1/2}
\]
and then obtain
\[
\begin{array}{l}
C=(A+B)^{-1/2}X(A+B)^{1/2}=
(A+B)^{-1/2}Y(A+B)^{1/2}\\[1ex]
+\,(A+B)^{-1/2} \Bigl((A+B)^{-1/2}\bigl(H-(A:B)\bigr)(A+B)^{-1/2}\Bigr)^{1/2}(A+B)^{1/2}\\[1.5ex]
=(A+B)^{-1}B +\Bigl((A+B)^{-1}\bigl(H-(A:B)\bigr)\Bigr)^{1/2}.
\end{array}
\]
Note that the operator appearing inside the square root in the last formula line may not be self-adjoint. It is however similar to a positive semi-definite operator and therefore has a unique square root with positive spectrum.
We have obtained.
\begin{theorem}
Let $ A,B $ and $ H $ be positive definite operators. The operator equation
\[
 F(C)=C^*AC+(I-C^*)B(I-C) =H
\]
has solutions in $ C $ if and only if $ H\ge A:B. $ One of the solutions is then given by
\[
C=(A+B)^{-1}B +\Bigl((A+B)^{-1}\bigl(H-(A:B)\bigr)\Bigr)^{1/2}.
\]
\end{theorem}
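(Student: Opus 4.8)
The plan is to reduce the operator equation $F(C)=H$ to a tractable algebraic identity by conjugating with $(A+B)^{-1/2}$ and completing the square. First I would expand $F(C)=C^*AC+(I-C^*)B(I-C)$ and collect terms. The quadratic-in-$C$ part is $C^*AC+C^*BC=C^*(A+B)C$, the constant term is $B$, and the linear terms are $-C^*B-BC$, so the equation $F(C)=H$ becomes
\[
C^*(A+B)C+B-C^*B-BC=H.
\]
Conjugating by $(A+B)^{-1/2}$ on both sides puts the leading coefficient into a manageable form and suggests the substitution $X=(A+B)^{1/2}C(A+B)^{-1/2}$, $Y=(A+B)^{-1/2}B(A+B)^{-1/2}$, turning the equation into $X^*X+Y-X^*Y-YX=(A+B)^{-1/2}H(A+B)^{-1/2}$.

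The key observation is that the left-hand side completes the square: since $Y$ is self-adjoint, $(X-Y)^*(X-Y)=X^*X-X^*Y-YX+Y^2$, so $X^*X+Y-X^*Y-YX=(X-Y)^*(X-Y)+Y-Y^2$. Thus the equation reads
\[
(X-Y)^*(X-Y)=(A+B)^{-1/2}H(A+B)^{-1/2}+Y^2-Y.
\]
The heart of the argument is then to simplify the right-hand side. Substituting $Y=(A+B)^{-1/2}B(A+B)^{-1/2}$ and factoring out the conjugating factors, I would show $Y^2-Y$ conjugates back to $B(A+B)^{-1}B-B=-B(A+B)^{-1}(A+B-B)=-B(A+B)^{-1}A$, which is precisely $-(A:B)$ by the identity $B(A+B)^{-1}A=(A^{-1}+B^{-1})^{-1}$ established in the preliminaries. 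Hence the right-hand side equals $(A+B)^{-1/2}\bigl(H-(A:B)\bigr)(A+B)^{-1/2}$.

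Now the solvability criterion is transparent: the equation has the form $(X-Y)^*(X-Y)=M$, where $M=(A+B)^{-1/2}(H-(A:B))(A+B)^{-1/2}$. The left side is manifestly positive semi-definite, and conversely every positive semi-definite $M$ admits a square root $M^{1/2}$, so $X-Y=M^{1/2}$ furnishes a solution; since conjugation by the invertible $(A+B)^{-1/2}$ preserves positivity, $M\ge 0$ holds exactly when $H-(A:B)\ge 0$, i.e. $H\ge A:B$. This gives the stated equivalence. For the explicit solution I would take the positive square root, set $X=Y+M^{1/2}$, and undo the substitution via $C=(A+B)^{-1/2}X(A+B)^{1/2}$, which yields
\[
C=(A+B)^{-1}B+\Bigl((A+B)^{-1}\bigl(H-(A:B)\bigr)\Bigr)^{1/2}.
\]

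The main obstacle I anticipate is the final term: after undoing the substitution, the factor $(A+B)^{-1/2}M^{1/2}(A+B)^{1/2}$ must be identified with $\bigl((A+B)^{-1}(H-(A:B))\bigr)^{1/2}$. This requires the similarity fact that $(A+B)^{-1/2}M^{1/2}(A+B)^{1/2}$ is a square root of $(A+B)^{-1/2}M(A+B)^{1/2}=(A+B)^{-1}(H-(A:B))$, an operator that need not be self-adjoint but is similar to the positive operator $M$, and therefore has a well-defined square root with positive spectrum. I would address this by noting that square roots are preserved under similarity transformations, so that the non-self-adjoint operator inside the root in the final formula inherits a unique positive-spectrum square root from $M^{1/2}$.
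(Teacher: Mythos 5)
Your proposal is correct and follows essentially the same route as the paper: the same rewriting of $F(C)=H$, the same substitution $X=(A+B)^{1/2}C(A+B)^{-1/2}$, $Y=(A+B)^{-1/2}B(A+B)^{-1/2}$, the same completion of the square to $(X-Y)^*(X-Y)=(A+B)^{-1/2}\bigl(H-(A:B)\bigr)(A+B)^{-1/2}$, and the same recovery of the explicit solution. You even anticipate the paper's own remark that the operator inside the final square root is not self-adjoint but is similar to a positive semi-definite operator and hence has a unique square root with positive spectrum.
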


\section{Generating operator inequalities}

Theorem~\ref{generator of operator inequalities} may serve as a generator for operator inequalities by suitably choosing the operator $ C. $ For $ C=\lambda I, $ where $ 0\le\lambda\le 1, $ we obtain
\[
A: B\le\lambda^2 A + (1-\lambda)^2 B.
\]
By setting $ \lambda=0, $ $ \lambda=1/2 $ or $ \lambda=1 $ we obtain the well-known inequalities
\[
 A:B\le B,\qquad A:B\le\frac{A+B}{4}\,,\qquad A:B\le  A.
\]
Setting $ C=(A+B)^{-1}B $ we obtain equality
\[
A:B=F(C). 
\]
Indeed, we note that
\[
I-C=I-(A+B)^{-1}B=(A+B)^{-1}(A+B-B)=(A+B)^{-1}A.
\]
Therefore,
\[
\begin{array}{l}
F(C)
=B(A+B)^{-1}A(A+B)^{-1}B+A(A+B)^{-1}B(A+B)^{-1}A\\[1.5ex]
=B(A+B)^{-1}A(A+B)^{-1}B+A(A+B)^{-1}A(A+B)^{-1}B\\[1.5ex]
=\bigl(B^{-1}+A^{-1}\bigr)^{-1}=A:B.
\end{array}
\]
We next use Theorem~\ref{generator of operator inequalities} to obtain new operator inequalities.

\begin{theorem}\label{new operator inequalities}
Let $ A,B $ be positive definite operators.

\begin{enumerate}[(i)]

\item Let $ P $ be an orthogonal projection. We obtain the inequality
\[
A:B\le PAP+(I-P)B(I-P).
\]
Setting $ A=B $ it reduces to the familiar inequality
\[
\frac{1}{2}A\le PAP+(I-P)A(I-P).
\]

\item The inequality
\[
A:B\le (A+B)^{-1}\bigl(BAB+ABA\bigr)(A+B)^{-1}
\]
is valid, and it is strict, since for $ A=B $ it reduces to $ \frac{1}{2}A\le\frac{1}{2}A. $

\item Let $ p $ be a real number. We obtain the inequality
\[
A:B\le (A^p:B^p)(A^{2p-1}:B^{2p-1})^{-1}(A^p:B^p),
\]
and it reduces to equality for $ p=1. $ The inequality is strict for arbitrary $ p, $ since for $ A=B $ it reduces to $ \frac{1}{2}A\le\frac{1}{2}A. $

\end{enumerate}

\end{theorem}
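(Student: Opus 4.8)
The plan is to derive all three inequalities from Theorem~\ref{generator of operator inequalities} by making, in each case, a judicious choice of the operator $C$ and then verifying that $F(C)=C^*AC+(I-C^*)B(I-C)$ reproduces exactly the right-hand side claimed. Since Theorem~\ref{generator of operator inequalities} guarantees $A:B\le F(C)$ for every $C$, it suffices to exhibit a $C$ for which $F(C)$ equals the stated expression; the inequality then follows at once, and the reductions for $A=B$ (and for $p=1$) are checked by direct substitution, using that a parallel sum of an operator with itself is half of it.

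For (i) I would simply take $C=P$. Because $P$ is an orthogonal projection we have $P^*=P$ and $I-P^*=I-P$, so $F(P)=PAP+(I-P)B(I-P)$ and the inequality is immediate; setting $A=B$ and using $A:A=\tfrac12 A$ yields the stated reduction. For (ii) I would take $C=B(A+B)^{-1}$, so that $C^*=(A+B)^{-1}B$, $I-C=A(A+B)^{-1}$ and $I-C^*=(A+B)^{-1}A$; a short computation then gives $F(C)=(A+B)^{-1}(BAB+ABA)(A+B)^{-1}$, and again $A=B$ reduces both sides to $\tfrac12 A$.

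Part (iii) is the substantive one, and the difficulty is to guess the correct $C$. Testing the commuting (scalar) case suggests replacing $A,B$ by their $p$-th powers in the equality choice $C=(A+B)^{-1}B$, namely
\[
C=(A^p+B^p)^{-1}B^p,\qquad S:=A^p+B^p .
\]
Then $I-C=S^{-1}A^p$, $C^*=B^pS^{-1}$, $I-C^*=A^pS^{-1}$, whence
\[
F(C)=B^pS^{-1}AS^{-1}B^p+A^pS^{-1}BS^{-1}A^p .
\]
The key step is to recognise this as $NM^{-1}N$ with $N=A^p:B^p$ and $M=A^{2p-1}:B^{2p-1}$. I would use three facts: first, $M^{-1}=A^{-(2p-1)}+B^{-(2p-1)}$ because $M$ is a parallel sum; second, the parallel-sum symmetry $N=A^pS^{-1}B^p=B^pS^{-1}A^p$; and third, the exponent identity $p-(2p-1)+p=1$. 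Writing $NM^{-1}N=NA^{-(2p-1)}N+NB^{-(2p-1)}N$ and inserting the two representations of $N$, the inner powers collapse via $A^pA^{-(2p-1)}A^p=A$ to give $NA^{-(2p-1)}N=B^pS^{-1}AS^{-1}B^p$, and likewise $NB^{-(2p-1)}N=A^pS^{-1}BS^{-1}A^p$; adding these recovers $F(C)$ exactly.

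The main obstacle I anticipate is precisely the identification $F(C)=NM^{-1}N$ in the noncommutative setting: a priori the right-hand side mixes $A,B$ with their powers $A^p,B^p,A^{2p-1},B^{2p-1}$, and one might fear that noncommutativity obstructs a clean identity. What rescues the computation is that each of the two middle products involves only powers of a \emph{single} operator, namely $A^pA^{-(2p-1)}A^p$ or $B^pB^{-(2p-1)}B^p$, which commute, so the exponent arithmetic goes through verbatim. Once the identity is in hand, Theorem~\ref{generator of operator inequalities} yields $A:B\le NM^{-1}N$; the case $p=1$ gives $N=M=A:B$ and hence equality, while $A=B$ turns every parallel sum into half the operator and reduces the inequality to $\tfrac12 A\le\tfrac12 A$, as claimed.
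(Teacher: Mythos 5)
Your proposal is correct and follows essentially the same route as the paper: the same choices $C=P$, $C=B(A+B)^{-1}$, and $C=(A^p+B^p)^{-1}B^p$ in Theorem~\ref{generator of operator inequalities}, with the same key ingredients for (iii), namely the symmetry $A^p(A^p+B^p)^{-1}B^p=B^p(A^p+B^p)^{-1}A^p$ and the exponent identity $p-(2p-1)+p=1$. The only cosmetic difference is that you expand $(A^p:B^p)(A^{2p-1}:B^{2p-1})^{-1}(A^p:B^p)$ and collapse it to $F(C)$, whereas the paper inserts $A=A^pA^{1-2p}A^p$ and $B=B^pB^{1-2p}B^p$ into $F(C)$ and builds up the same product; it is the identical computation read in the opposite direction.
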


\begin{proof}
By setting $ C=P $ and applying  Theorem~\ref{generator of operator inequalities} we obtain $ (i). $  By setting $ C=B(A+B)^{-1} $ we obtain $ I-C=A(A+B)^{-1} $ and thus
\[
\begin{array}{l}
C^*AC+(I-C)^*B(I-C)\\[1ex]
=(A+B)^{-1}BAB(A+B)^{-1}+(A+B)^{-1}ABA(A+B)^{-1}
\end{array}
\]
from which $ (ii) $ follows. Finally, we set $ C=(A^p+B^p)^{-1}B^p $ and since $ I-C=(A^p+B^p)^{-1}A^p $  and $ A^p(A^p+B^p)^{-1}B^p=B^p(A^p+B^p)^{-1}A^p $ we obtain
\[
\begin{array}{l}
C^*AC+(I-C)^*B(I-C)\\[1.5ex]
=B^p(A^p+B^p)^{-1}A(A^p+B^p)^{-1}B^p+A^p(A^p+B^p)^{-1}B(A^p+B^p)^{-1}A^p\\[1.5ex]
=A^p(A^p+B^p)^{-1}B^p A^{1-2p}B^p(A^p+B^p)^{-1}A^p\\[1ex]
\hskip 9em +A^p(A^p+B^p)^{-1} B (A^p+B^p)^{-1}A^p\\[1.5ex]
=A^p(A^p+B^p)^{-1}B^p\bigl(A^{1-2p}+B^{1-2p}\bigr)B^p(A^p+B^p)^{-1}A^p\\[1.5ex]
=(A^p:B^p)\bigl(A^{2p-1}:B^{2p-1}\bigr)^{-1}(A^p:B^p)
\end{array}
\]
as desired. This proves $ (iii). $
\end{proof}

By multiplying $ (iii) $ in Theorem~\ref{new operator inequalities} by $ 2 $ we obtain the inequality between harmonic means
\begin{equation}
H_2(A,B)\le H_2(A^p,B^p) H_2\bigl(A^{2p-1},B^{2p-1}\bigr)^{-1}H_2(A^p,B^p)
\end{equation}
for positive definite operators $ A $ and $ B $ and arbitrary $ p\in\mathbf R. $ If we in particular put $ p=1/2 $ we obtain
\begin{equation}
H_2(A,B)\le  H_2(A^{1/2}, B^{1/2})^2.
\end{equation}
This is an improvement of the inequality
\[
H_2(A,B)^{1/2}\le H_2(A^{1/2}, B^{1/2})
\]
which is plain. Indeed, for $ 0\le p \le 1, $ we obtain by operator concavity of the function $ t\to t^p $ the inequality
\begin{equation}
\begin{array}{rl}
H_2(A,B)^p&=\displaystyle\left(\frac{2}{A^{-1}+B^{-1}}\right)^p=\left(\frac{A^{-1}+B^{-1}}{2}\right)^{-p}\\[3ex]
&\le \displaystyle\frac{2}{A^{-p}+B^{-p}}=H_2(A^p,B^p).
\end{array}
\end{equation}
The reverse inequality is obtained for $ -1\le p\le 0 $ and $ 1\le p\le 2 $ by operator convexity. It is interesting to note that the inequality
\begin{equation}\label{conjecture}
H_2(A,B)\le  H_2(A^p, B^p)^{1/p}
\end{equation}
is false for $ p=1/4 $ with counter examples in two-by-two matrices. We conjecture that (\ref{conjecture}) is false for $ 0<p<1/2 $ and true for $ 1/2\le p\le 1. $

\subsection{The power means}

Bhagwat and Subramanian \cite[Section 4]{bhagwat:1978} introduced for $ p>0 $ the power mean
\begin{equation}\label{definition of the power mean}
M_p(A,B)=\left(\frac{A^p+B^p}{2}\right)^{1/p}
\end{equation}
of positive definite operators $ A $ and $ B. $ If $ p\ge 1 $ then the function  $ t\to t^{1/p} $ is operator concave and thus 
\[
M_p(A,B)\ge \frac{A+B}{2}\ge 2(A:B)> A:B.
\]
The parallel sum is thus majorized by the power mean. However, this result can in general not be extended to $ 0<p<1. $

\begin{example}
Consider the two-by-two matrices
\[
A = \begin{pmatrix}
      0.14623 & -0.07525\\
      -0.07525 & 0.03873
      \end{pmatrix},\qquad
B = \begin{pmatrix}
      0.733 & -0.43\\
      -0.43 & 0.2525
      \end{pmatrix}.
\]
$ A $ has approximately eigenvalues $ \{0.184955,  5.00338\cdot 10^{-6}\} $ and $ B $ has
approximately eigenvalues $ \{0.985315, 0.00018522\}, $ so they are positive definite. Setting $ p=1/2 $
the smallest eigenvalue of
\[
\left(\frac{A^{1/2}+B^{1/2}}{2}\right)^2-(A:B)
\]
is approximately $ -1.57101  \cdot 10^{-6}. $

\end{example}


\vfill

{\small
\noindent Frank Hansen: Department of Mathematical Sciences, Copenhagen University, Denmark.\\
Email: frank.hansen@math.ku.dk.
      }

\end{document}